\author[P.~Leonetti]{Paolo Leonetti}
\address{Department of Statistics, Universit\`a ``L. Bocconi'', via Roentgen 1, 20136 Milan, Italy}
\email{leonetti.paolo@gmail.com}
\author[H.I.~Miller]{Harry I. Miller}
\address{Faculty of Engineering and Natural Sciences, International University of Sarajevo, 71000 Sarajevo, Bosnia-Herzegovina}
\email{himiller@hotmail.com}
\author[L.~Miller-Van Wieren]{Leila Miller-Van Wieren}
\address{Faculty of Engineering and Natural Sciences, International University of Sarajevo, 71000 Sarajevo, Bosnia-Herzegovina}
\email{lejla.miller@yahoo.com}
\keywords{Statistical cluster points, statisical limit points, asymptotic density, meager set.}
\subjclass[2010]{Primary: 40A35. Secondary: 11B05, 54A20.}
\title{Duality between Measure and Category \\ of Almost All Subsequences of a Given Sequence}
   \def\MR#1{}
\newtheorem{thm}{Theorem}[section]
\newtheorem{lem}[thm]{Lemma}
\theoremstyle{definition} 
\let\olddefi\defi
\renewcommand{\defi}{\olddefi\normalfont}
\let\oldexample\example
\renewcommand{\example}{\oldexample\normalfont}
\let\oldrmk\rmk
\renewcommand{\rmk}{\oldrmk\normalfont}
\newcommand{\clusterfin}{\mathrm{L}_x}
\providecommand{\MR}[1]{}
\providecommand{\bysame}{\leavevmode\hbox to3em{\hrulefill}\thinspace}
\providecommand{\MR}{\relax\ifhmode\unskip\space\fi MR }
\providecommand{\href}[2]{#2}
\begin{document}

\maketitle
\thispagestyle{empty}

\begin{abstract}
\noindent 
Let $S$ be the set of subsequences $(x_{n_k})$ of a given real sequence $(x_n)$ which preserve the set of statistical cluster points.  
It has been recently shown that $S$ is a set of full (Lebesgue) measure. Here, on the other hand, we prove that $S$ is meager if and only if 
there exists an ordinary limit point of $(x_n)$ which is not also a statistical cluster point of $(x_n)$. This provides a non-analogue between measure and category.
\end{abstract}

\section{Introduction}\label{sec:intro}

Oxtoby's classical book \cite{MR584443} examines analogues and non-analogues of statements about measure and category. To make some examples, recall that every number $\omega \in (0,1]$ has a unique binary representation 
\begin{equation}\label{eq:binaryexpansion}
\omega=\sum_{n\ge 1}\frac{d_n(\omega)}{2^n}
\end{equation}
such that $d_n(\omega)=1$ for infinitely many positive integers $n$. Then, it is well known that the set of normal numbers 
$$
\mathcal{N}:=\left\{\omega \in (0,1]: \lim_{n\to \infty} \frac{1}{n}\sum_{k=1}^n d_k(\omega)=\frac{1}{2}\right\}
$$
has full Lebesgue measure, that is, $\lambda(\mathcal{N})=1$, where $\lambda$ stands for the (completion of the) Lebesgue measure on $\mathbf{R}$. On the other hand, $\mathcal{N}$ is a first category set. Hence $\mathcal{N}$ is ``big'' from a measure theoretic viewpoint, but ``small'' in the topological sense. This gives a non-analogue between measure and category.

In a different direction, let $A, B\subseteq \mathbf{R}$ be two sets with positive inner Lebesgue measure, i.e., 
they contain a closed set with positive Lebesgue measure. 
Then, by a famous result of Steinhaus, the sumset $A+B:=\{a+b: a \in A, b \in B\}$ has non-empty interior (in particular, it is `big'' in the measure sense), cf. e.g. \cite[Theorem 3.7.1]{MR2467621}. As a topogical analogue, if $A,B \subseteq \mathbf{R}$ are two second category sets with the Baire property, then the sumset $A+B$ has non-empty interior as well (in particular, it is `big'' also in the category sense), cf. \cite[Theorem 2.9.1]{MR2467621}.

The aim of this note is to provide another example of non-analogue between measure and category related to statistical cluster points. Here, we recall that a real $\ell$ is said to be a \emph{statistical cluster point} of a real sequence $x=(x_n)$ provided that 
$$
\forall \varepsilon>0,\,\,\,\,\,\mathrm{d}^\star\left(\{n \in \mathbf{N}: |x_n-\ell|\le \varepsilon\}\right)>0,
$$
where $\mathrm{d}^\star$ stands for the upper asymptotic density, i.e., the function
\begin{equation}\label{eq:dstar}
\mathrm{d}^\star: \mathcal{P}(\mathbf{N}) \to \mathbf{R}: X \mapsto \limsup_{n\to \infty} \frac{|X \cap [1,n]|}{n}.
\end{equation}
Hereafter, we denote the set of statistical cluster points of a real sequence $x$ by $\Gamma_x$ and the set of ordinary limit points by $\clusterfin$ (clearly, $\Gamma_x \subseteq \clusterfin$). 

Statistical cluster points were introduced by Fridy \cite{MR1181163} and then studied by many authors, see e.g. \cite{PaoloMarek17, MR1416085, MR1838788, Leo17, MR1260176, Miller18, MR1924673}. However, this notion has been studied much before under a different name. Indeed, as it follows by \cite[Theorem 4.2]{LMxyz}, statistical cluster points of a real sequence $x$ correspond to classical ``cluster points'' of a filter $\mathscr{F}$ on $\mathbf{R}$ (depending on $x$), cf. \cite[Definition 2, p.69]{MR1726779}.

\section{Main results}

For each $\omega \in (0,1]$ and real sequence $x$, let $x\upharpoonright \omega$ be the subsequence of $(x_n)$ obtained by choosing all the indexes $n$ such that $d_n(\omega)=1$ in the representation \eqref{eq:binaryexpansion},  cf. \cite[Appendix A31]{MR1324786} and \cite{MR1260176}. Accordingly, the following result has been recently shown by the authors, see \cite{Miller18} and \cite[Theorem 3.1]{Leo17}:
\begin{thm}\label{thm:oldmeasure}
Let $x$ be a real sequence. Then 
$\lambda\left(\left\{\omega \in (0,1]: \Gamma_x=\Gamma_{x\upharpoonright \omega}\right\}\right)=1$.
\end{thm}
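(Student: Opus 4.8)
The plan is to recast the statement probabilistically. Under $\lambda$, the binary digits $(d_n(\cdot))_{n\ge 1}$ appearing in \eqref{eq:binaryexpansion} are, up to the null set of dyadic rationals, independent identically distributed Bernoulli random variables with parameter $1/2$; equivalently, the random index set $A_\omega:=\{n\in\mathbf{N}:d_n(\omega)=1\}$ retains each positive integer independently with probability $1/2$. By the strong law of large numbers the asymptotic density $\mathrm{d}(A_\omega)$ exists and equals $1/2$ for $\lambda$-almost every $\omega$, so it will suffice to prove that $\Gamma_x=\Gamma_{x\upharpoonright\omega}$ holds on a further full-measure subset of this event.

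Writing $A_\omega=\{a_1<a_2<\cdots\}$ and $E_\varepsilon(\ell):=\{n\in\mathbf{N}:|x_n-\ell|\le\varepsilon\}$, the first step is a deterministic density computation valid whenever $\mathrm{d}(A_\omega)=1/2$. Since the $k$-th term of $x\upharpoonright\omega$ is $x_{a_k}$ and $|A_\omega\cap[1,a_K]|=K\sim a_K/2$, one checks that
\begin{equation*}
\mathrm{d}^\star\!\left(\{k\in\mathbf{N}:|x_{a_k}-\ell|\le\varepsilon\}\right)=2\,\mathrm{d}^\star\!\left(E_\varepsilon(\ell)\cap A_\omega\right),
\end{equation*}
using that the $\limsup$ defining $\mathrm{d}^\star(E_\varepsilon(\ell)\cap A_\omega)$ is unaffected by restricting the counting variable to $A_\omega$ (the counting function of $E_\varepsilon(\ell)\cap A_\omega$ jumps only inside $A_\omega$). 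Consequently, on this event,
\begin{equation*}
\ell\in\Gamma_{x\upharpoonright\omega}\iff \mathrm{d}^\star\!\left(E_\varepsilon(\ell)\cap A_\omega\right)>0\ \text{ for every }\varepsilon>0.
\end{equation*}
Because $E_\varepsilon(\ell)\cap A_\omega\subseteq E_\varepsilon(\ell)$ gives $\mathrm{d}^\star(E_\varepsilon(\ell)\cap A_\omega)\le\mathrm{d}^\star(E_\varepsilon(\ell))$, the inclusion $\Gamma_{x\upharpoonright\omega}\subseteq\Gamma_x$ is immediate and uses no randomness beyond $\mathrm{d}(A_\omega)=1/2$.

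The substance lies in the reverse inclusion $\Gamma_x\subseteq\Gamma_{x\upharpoonright\omega}$, which needs the probabilistic input through the following lemma: for a \emph{fixed} set $B\subseteq\mathbf{N}$ with $\mathrm{d}^\star(B)>0$ one has $\mathrm{d}^\star(B\cap A_\omega)=\tfrac12\,\mathrm{d}^\star(B)>0$ for $\lambda$-almost every $\omega$. I would prove this by enumerating $B=\{b_1<b_2<\cdots\}$, observing that $|B\cap A_\omega\cap[1,b_m]|=\sum_{i=1}^m\mathbf{1}[b_i\in A_\omega]$ is a partial sum of i.i.d. Bernoulli$(1/2)$ variables, hence asymptotic to $m/2$ by the strong law, and then evaluating along a subsequence $(n_j)$ realizing $\mathrm{d}^\star(B)=\lim_j|B\cap[1,n_j]|/n_j$. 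Applying the lemma to each pair $(q,\delta)$ with $q,\delta\in\mathbf{Q}$, $\delta>0$, and $\mathrm{d}^\star(E_\delta(q))>0$, and intersecting these countably many full-measure events, yields a single full-measure event on which $\mathrm{d}^\star(E_\delta(q)\cap A_\omega)>0$ for all such admissible pairs. On this event, fix $\ell\in\Gamma_x$ and $\varepsilon>0$, and choose rationals $\delta<\varepsilon/2$ and $q$ with $|q-\ell|<\delta/2$; the triangle inequality gives the nesting $E_{\delta/2}(\ell)\subseteq E_\delta(q)\subseteq E_\varepsilon(\ell)$, while $\ell\in\Gamma_x$ forces $\mathrm{d}^\star(E_{\delta/2}(\ell))>0$, so $(q,\delta)$ is admissible and $\mathrm{d}^\star(E_\varepsilon(\ell)\cap A_\omega)\ge\mathrm{d}^\star(E_\delta(q)\cap A_\omega)>0$. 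As $\varepsilon$ was arbitrary, $\ell\in\Gamma_{x\upharpoonright\omega}$.

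The main obstacle is exactly the passage from a single set $B$ to the full, potentially uncountable, collection of cluster points: the lemma is almost sure for each fixed $B$, but its exceptional null set depends on $B$, so a naive union over all $\ell$ and all $\varepsilon$ is not permissible. The rational reduction above circumvents this by exploiting the monotonicity of $\varepsilon\mapsto E_\varepsilon(\ell)$ together with the defining property of $\Gamma_x$ to squeeze each genuine cluster-point condition between two conditions indexed by rationals, thereby reducing to countably many applications of the lemma and securing a single full-measure event on which $\Gamma_x=\Gamma_{x\upharpoonright\omega}$.
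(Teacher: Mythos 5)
Your proposal is correct. Note that the paper itself offers no proof of Theorem \ref{thm:oldmeasure}; it is quoted as a known result with references to \cite{Miller18} and \cite[Theorem 3.1]{Leo17}, so there is no in-paper argument to compare against line by line --- but your route (digits $d_n$ as i.i.d.\ Bernoulli$(1/2)$ variables, the strong law applied to a fixed set $B$ of positive upper density, then a reduction of the uncountable family of conditions indexed by $(\ell,\varepsilon)$ to countably many rational pairs) is essentially the standard measure-theoretic argument underlying those references, going back to Miller's technique in \cite{MR1260176}. The individual steps all check out: the identity $\mathrm{d}^\star(\{k:|x_{a_k}-\ell|\le\varepsilon\})=2\,\mathrm{d}^\star(E_\varepsilon(\ell)\cap A_\omega)$ is valid whenever $\mathrm{d}(A_\omega)=\tfrac12$, because the counting function of $E_\varepsilon(\ell)\cap A_\omega$ is constant between consecutive elements of $A_\omega$ (so the $\limsup$ over all $N$ coincides with the $\limsup$ along $N=a_K$) and $a_K/K\to 2$; the fixed-$B$ lemma follows from the strong law applied to the independent indicators $\mathbf{1}[b_i\in A_\omega]$; and the squeeze $E_{\delta/2}(\ell)\subseteq E_\delta(q)\subseteq E_\varepsilon(\ell)$ with rational $q,\delta$ is exactly the device needed to collapse the $B$-dependent null sets into a single one --- you correctly identify this as the crux. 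Two minor remarks: for your purposes only the inequality $\mathrm{d}^\star(B\cap A_\omega)\ge\tfrac12\,\mathrm{d}^\star(B)$ is needed, which is what the evaluation along a subsequence realizing $\mathrm{d}^\star(B)$ actually delivers, so you may drop the claim of equality; and since your argument exhibits a measurable full-measure event contained in the set $\{\omega:\Gamma_x=\Gamma_{x\upharpoonright\omega}\}$, whose own measurability is not obvious, the conclusion $\lambda(\cdot)=1$ as stated rests on the paper's convention that $\lambda$ denotes the \emph{completed} Lebesgue measure --- worth a sentence in a final write-up.
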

In other words, almost all subsequences of $x$ preserve the set of statistical cluster points, from a measure theoretic viewpoint. 

Our main result, 
which is proved in Section \ref{sec:proof}, 
provides the topological counterpart of Theorem \ref{thm:oldmeasure}:
\begin{thm}\label{thm:main}
Let $x$ be a real sequence. Then $\left\{\omega \in (0,1]: \Gamma_x=\Gamma_{x\upharpoonright \omega}\right\}$ is not meager if and only if every ordinary limit point is also a statistical cluster point, i.e., $\Gamma_x=\clusterfin$. Moreover, in this case, it is comeager.
\end{thm}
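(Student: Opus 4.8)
The plan is to reduce the entire statement to a single genericity claim: for a comeager set of $\omega \in (0,1]$ one has $\Gamma_{x\upharpoonright \omega} = \clusterfin$. Granting this, both directions fall out at once. If $\Gamma_x = \clusterfin$, then $\{\omega : \Gamma_x = \Gamma_{x\upharpoonright \omega}\}$ contains a comeager set, hence is comeager and in particular not meager, since $(0,1]$ is a Baire space; this settles the ``moreover'' clause and the implication $\Leftarrow$. Conversely, if $\clusterfin \setminus \Gamma_x \neq \emptyset$, then for comeager many $\omega$ we have $\Gamma_{x\upharpoonright \omega} = \clusterfin \neq \Gamma_x$, so $\{\omega : \Gamma_x = \Gamma_{x\upharpoonright \omega}\}$ is disjoint from a comeager set and therefore meager, which is the contrapositive of $\Rightarrow$. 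Throughout I would identify $\omega$ with its digit sequence $(d_n(\omega))$ and work in the Cantor space $\{0,1\}^{\mathbf{N}}$, writing $A_\omega := \{n : d_n(\omega)=1\}$ so that $x\upharpoonright \omega = (x_n)_{n \in A_\omega}$; the map $\omega \mapsto (d_n(\omega))$ is a homeomorphism off the countable (hence meager) set of dyadic rationals, so ``comeager'' may be checked interchangeably in either space.

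For the genericity claim, one inclusion is free: for every $\omega$ with $A_\omega$ infinite (which holds comeagerly), a statistical cluster point of a sequence is a fortiori an ordinary limit point of it, and limit points of a subsequence are limit points of $x$, whence $\Gamma_{x\upharpoonright \omega} \subseteq \mathrm{L}_{x\upharpoonright \omega} \subseteq \clusterfin$. The content is the reverse inclusion $\clusterfin \subseteq \Gamma_{x\upharpoonright \omega}$. Since $\Gamma_{x\upharpoonright \omega}$ is closed, it suffices to force every element of a fixed countable dense subset $D \subseteq \clusterfin$ to be a statistical cluster point of $x\upharpoonright \omega$; passing to closures then yields $\clusterfin = \overline{D} \subseteq \Gamma_{x\upharpoonright \omega}$.

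The heart of the argument is a Baire category construction. Fix $\ell \in D$ and $\varepsilon>0$, and set $B := \{n : |x_n-\ell|\le \varepsilon\}$, which is infinite because $\ell \in \clusterfin$. Writing $a_1 < a_2 < \cdots$ for the elements of $A_\omega$, a direct re-indexing gives the identity
\[
\mathrm{d}^\star\big(\{k : |x_{a_k}-\ell|\le \varepsilon\}\big) \;=\; \limsup_{N\to\infty} \frac{|A_\omega \cap B \cap [1,N]|}{|A_\omega \cap [1,N]|},
\]
since the two ratios agree at $N=a_k$ and the right-hand ratio is constant between consecutive elements of $A_\omega$. For rationals $\delta \in (0,1)$ and integers $m$, consider
\[
U_{\ell,\varepsilon,\delta,m} := \big\{\omega : \exists\, N>m \ \text{with}\ |A_\omega \cap B \cap [1,N]| > (1-\delta)\,|A_\omega \cap [1,N]|\big\}.
\]
Each such set is open, as every witness depends on finitely many digits, and, crucially, dense: given any finite prefix $d_1,\dots,d_M$ fixing $c := |A_\omega \cap [1,M]|$, choose the $K$ least elements of $B$ exceeding $\max(m,M)$, set $d_n=1$ exactly at these and $d_n=0$ at the remaining positions up to the largest of them $N_0$; then $|A_\omega \cap [1,N_0]| = c+K$ while $|A_\omega \cap B \cap [1,N_0]| \ge K$, and $K/(c+K) > 1-\delta$ once $K$ is large. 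Intersecting the countably many $U_{\ell,\varepsilon,\delta,m}$ over $\ell \in D$, rational $\varepsilon,\delta$, and $m \in \mathbf{N}$ yields a comeager set on which the displayed $\limsup$ equals $1$ for all such $\ell,\varepsilon$; hence every $\ell \in D$ lies in $\Gamma_{x\upharpoonright \omega}$, proving the claim.

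The main obstacle, and the step demanding the most care, is precisely this density computation: from an arbitrary finite prefix one must build an extension of the digit sequence making the selected subsequence \emph{over-sample} the window $B$ around $\ell$ at an arbitrarily late scale. This is the mechanism by which a topologically generic subsequence acquires every ordinary limit point of $x$ as a statistical cluster point, in sharp contrast with the measure-theoretic behaviour of Theorem \ref{thm:oldmeasure}. The remaining technical points to verify are the re-indexing identity, the transfer of category between $\{0,1\}^{\mathbf{N}}$ and $(0,1]$ across the meager set of dyadic rationals, and the degenerate cases (for instance $\clusterfin=\emptyset$) in which both sides of the equivalence hold trivially.
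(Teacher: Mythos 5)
Your proposal is correct and takes essentially the same route as the paper: you reduce everything to the genericity claim that $\Gamma_{x\upharpoonright \omega}=\mathrm{L}_x$ for comeager many $\omega$, established through a countable dense subset of the closed set $\mathrm{L}_x$ together with the closedness of $\Gamma_{x\upharpoonright \omega}$, which is exactly the structure of the paper's Lemma \ref{lem:prelim} and its application, including the identical derivation of both directions from comeagerness. Your density argument for the sets $U_{\ell,\varepsilon,\delta,m}$ --- extending an arbitrary finite digit prefix so that the selected subsequence over-samples $B=\{n: |x_n-\ell|\le \varepsilon\}$ at a late scale --- is the same mechanism by which the paper shows the sets $Q(m,k,n)$ are nowhere dense, merely phrased dually (open dense sets in Cantor space, yielding upper density $1$, instead of nowhere dense sets built from dyadic intervals in $(0,1]$).
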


Lastly, given a real sequence $x$,  we recall that a real $\ell$ is said to be a \emph{statistical limit point} of $x$ provided that there exists a subsequence $(x_{n_k})$ converging (in the ordinary sense) to $\ell$ and $\mathrm{d}^\star(\{n_k: k \in \mathbf{N}\})>0$. We denote by $\Lambda_x$ the set of statistical limit points of $x$. 

It is known that the analogue of Theorem \ref{thm:oldmeasure} holds also for statistical limit points, see \cite[Theorem 3.3]{Miller18} and \cite[Theorem 4.2]{Leo17a}. In the same direction, we have the analogue of Theorem \ref{thm:main}:
\begin{thm}\label{thm:main2}
Let $x$ be a real sequence. Then $\left\{\omega \in (0,1]: \Lambda_x=\Lambda_{x\upharpoonright \omega}\right\}$ is not meager if and only if $\Lambda_x=\clusterfin$. Moreover, in this case, it is comeager.
\end{thm}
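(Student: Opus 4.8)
The plan is to mirror the category proof of Theorem~\ref{thm:main}, working throughout in the Cantor space $\{0,1\}^{\mathbf N}$: as in that proof, the digit map $\omega \mapsto (d_n(\omega))_n$ identifies $(0,1]$ with the set of binary sequences having infinitely many $1$s, the two spaces differing only by a countable (hence meager) set on whose complement the map is a homeomorphism, so meagerness and comeagerness may be checked in $\{0,1\}^{\mathbf N}$ with its product topology (basic open sets fix finitely many digits). For $\ell \in \mathbf R$ and $\varepsilon>0$ write $A_\varepsilon(\ell):=\{n \in \mathbf N: |x_n-\ell|\le \varepsilon\}$ and $N(\omega):=\{n: d_n(\omega)=1\}$. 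The first step is to record the analytic description
\[
\ell \in \Lambda_y \iff \lim_{\varepsilon\to 0^+}\mathrm{d}^\star(\{k: |y_k-\ell|\le \varepsilon\})>0,
\]
valid for every sequence $y$ (one implication is immediate, the other follows by a standard diagonal extraction); note the limit is an infimum, since the sets shrink as $\varepsilon\to 0$. Applying this to $y=x\upharpoonright \omega$ and using that the ratio $|A_\varepsilon(\ell)\cap N(\omega)\cap[1,m]|/|N(\omega)\cap[1,m]|$ is constant between consecutive elements of $N(\omega)$, membership $\ell \in \Lambda_{x\upharpoonright\omega}$ reduces to a statement about the relative upper density of $A_\varepsilon(\ell)$ inside $N(\omega)$.

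The technical core is a uniform lemma: for every $\ell \in \clusterfin$ the set
\[
G_\ell:=\Bigl\{\omega: \forall \varepsilon>0,\ \mathrm{d}^\star(\{k: |(x\upharpoonright\omega)_k-\ell|\le \varepsilon\})\ge \tfrac12\Bigr\}
\]
is comeager. To prove it I would fix $j,p \in \mathbf N$ and set
\[
U_{j,p}:=\Bigl\{\omega: \exists\, m>p,\ |A_{1/j}(\ell)\cap N(\omega)\cap[1,m]|\ge \tfrac12\,|N(\omega)\cap[1,m]|\ge 1\Bigr\}.
\]
Each $U_{j,p}$ is open (its witnessing condition depends only on $d_1,\dots,d_m$) and dense: given any finite digit string, since $A_{1/j}(\ell)$ is infinite one extends it by selecting beyond its length \emph{only} indices of $A_{1/j}(\ell)$, which drives the displayed ratio to $1$ and hence past $\tfrac12$ at some large $m>p$. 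Then $G_\ell \supseteq \bigcap_{j,p}U_{j,p}$ is a dense $G_\delta$; for $\omega$ therein one gets $\mathrm{d}^\star(\{k:|(x\upharpoonright\omega)_k-\ell|\le 1/j\})\ge\tfrac12$ for every $j$, and the lower bound $\tfrac12$ is crucially independent of both $\ell$ and $\varepsilon$.

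Granting the lemma, the \emph{only if} direction is immediate: if $\Lambda_x\neq \clusterfin$ then, since always $\Lambda_x\subseteq\clusterfin$, there is $\ell\in\clusterfin\setminus\Lambda_x$; by the lemma $G_\ell$ is comeager, yet each $\omega\in G_\ell$ satisfies $\ell\in\Lambda_{x\upharpoonright\omega}\setminus\Lambda_x$, whence $\Lambda_{x\upharpoonright\omega}\neq\Lambda_x$, so $S:=\{\omega:\Lambda_x=\Lambda_{x\upharpoonright\omega}\}$ lies in the meager set $G_\ell^c$. For the \emph{if} direction, assume $\Lambda_x=\clusterfin=:F$; as $\clusterfin$ is closed, $F$ is closed, hence separable, so fix a countable dense $D\subseteq F$. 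Always $\Lambda_{x\upharpoonright\omega}\subseteq \mathrm{L}_{x\upharpoonright\omega}\subseteq \clusterfin=F$, so only the reverse inclusion needs forcing on a comeager set. On the comeager set $\bigcap_{\ell\in D}G_\ell$ every $\ell_i\in D$ satisfies $\mathrm{d}^\star(\{k:|(x\upharpoonright\omega)_k-\ell_i|\le\varepsilon\})\ge\tfrac12$ for all $\varepsilon>0$; for arbitrary $\ell\in F$ and $\varepsilon>0$ pick $\ell_i\in D$ with $|\ell-\ell_i|<\varepsilon/2$, so that $A_\varepsilon(\ell)\supseteq A_{\varepsilon/2}(\ell_i)$ passes to the subsequence and yields $\mathrm{d}^\star(\{k:|(x\upharpoonright\omega)_k-\ell|\le\varepsilon\})\ge\tfrac12$. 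Letting $\varepsilon\to0$ gives $\ell\in\Lambda_{x\upharpoonright\omega}$, hence $F\subseteq\Lambda_{x\upharpoonright\omega}$ and $\Lambda_{x\upharpoonright\omega}=\Lambda_x$; thus $S$ is comeager, and in particular non-meager since $(0,1]$ is a Baire space.

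I expect the main obstacle to be precisely this last passage from the countable dense set $D$ to all of $F$. Unlike $\Gamma_{x\upharpoonright\omega}$, which is closed (so that membership of a dense set forces membership of its closure, as in Theorem~\ref{thm:main}), the set $\Lambda_{x\upharpoonright\omega}$ need not be closed, and density of $D$ alone does \emph{not} give $F\subseteq\Lambda_{x\upharpoonright\omega}$. This is exactly what compels the lemma to produce a lower bound ($\tfrac12$) that is uniform in both $\ell$ and $\varepsilon$; the monotonicity $A_\varepsilon(\ell)\supseteq A_{\varepsilon/2}(\ell_i)$ then transports that uniform bound from $D$ to every point of $F$. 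The remaining work is bookkeeping: carefully justifying the reduction to $\{0,1\}^{\mathbf N}$ at the meager set of dyadic parameters, and verifying that the relative-density reformulation of $\Lambda_{x\upharpoonright\omega}$ is faithful.
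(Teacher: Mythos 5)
Your proposal is correct and takes essentially the same route as the paper: your uniform-level sets $G_\ell$ are the paper's $T_k(\ell)$ (with lower bound $\nicefrac{1}{2}$ in place of the paper's $1$), your $\varepsilon/2$-approximation passing from the countable dense set $D$ to all of $F$ is exactly the paper's use of the closedness of the level sets $\Lambda^k_{x\upharpoonright \omega}$, and the two directions of the equivalence are handled identically. The only differences are presentational --- you work in Cantor space with explicit open dense sets $U_{j,p}$ rather than in $(0,1]$ with dyadic intervals, and you spell out the characterization $\ell \in \Lambda_y \iff \inf_{\varepsilon>0}\mathrm{d}^\star(\{k: |y_k-\ell|\le \varepsilon\})>0$ (via diagonal extraction) that the paper uses implicitly through the decomposition $\Lambda_{x\upharpoonright \omega}=\bigcup_k \Lambda^k_{x\upharpoonright \omega}$.
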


Proofs follow in Section \ref{sec:proof}. We leave as an open question for the interested reader to check whether Theorem \ref{thm:main} can be extended to every ideal on $\mathbf{N}$. To be precise, let $\mathcal{I}\subseteq \mathcal{P}(\mathbf{N})$ be a collection of subsets closed under taking subsets and finite unions, containing all the finite sets, and different from $\mathcal{P}(\mathbf{N})$ itself. Moreover, given a real sequence $x$, we let $\Gamma_x(\mathcal{I})$ be the set of $\mathcal{I}$\emph{-cluster points} of $x$, that is, the set of all $\ell$ such that $\{n: |x_n-\ell|\le \varepsilon\} \notin \mathcal{I}$ for all $\varepsilon>0$, cf. \cite{LMxyz}. (Note that, if $\mathcal{I}$ is the ideal of zero asymptotic density sets, i.e., 
$$
\mathcal{I}_0:=\left\{X\subseteq \mathbf{N}: \mathrm{d}^\star(X)=0\right\},
$$
then $\Gamma_x=\Gamma_x(\mathcal{I}_0)$.) Accordingly, is it true that $\left\{\omega \in (0,1]: \Gamma_x(\mathcal{I})=\Gamma_{x\upharpoonright \omega}(\mathcal{I})\right\}$ is not meager if and only if $\Gamma_x(\mathcal{I})=\clusterfin$?

\section{Proof of Theorems \ref{thm:main} and \ref{thm:main2}}\label{sec:proof}
We start with a preliminary lemma.
\begin{lem}\label{lem:prelim}
Let $x$ be a real sequence with an ordinary limit point $\ell$. Then 
$$
S(\ell):=\left\{\omega \in (0,1]: \ell \in \Gamma_{x\upharpoonright \omega}\right\}
$$
is comeager.
\end{lem}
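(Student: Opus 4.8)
The plan is to unwind the definition of statistical cluster point for the subsequence $x\upharpoonright\omega$ and recast membership in $S(\ell)$ as a statement about relative densities, which I then force to hold on a comeager set by a Baire-category argument. Throughout I identify $(0,1]$ with the Cantor space $\{0,1\}^{\mathbf{N}}$ via $\omega \leftrightarrow (d_n(\omega))$ from \eqref{eq:binaryexpansion}; since this map is a homeomorphism off the countable (hence meager) set of dyadic rationals, it preserves the meager/comeager dichotomy, and basic open sets correspond to cylinders $[\sigma]$ fixing finitely many digits. Writing $N(\omega):=\{n: d_n(\omega)=1\}$ for the (always infinite, by the convention in \eqref{eq:binaryexpansion}) set of retained indices and $B_m:=\{n \in \mathbf{N}: |x_n-\ell|\le 1/m\}$, the first step is the bookkeeping identity
$$
\mathrm{d}^\star\!\left(\{k: |(x\upharpoonright\omega)_k - \ell|\le 1/m\}\right)=\limsup_{n\to\infty}\frac{|N(\omega)\cap B_m\cap[1,n]|}{|N(\omega)\cap[1,n]|},
$$
which follows by passing from the running index $k$ of the subsequence to the original index $n=n_k$ and noting that the right-hand ratio is constant between consecutive elements of $N(\omega)$ (so the $\limsup$ over $\mathbf{N}$ agrees with that over $N(\omega)$). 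Since $\ell$ is an ordinary limit point, each $B_m$ is infinite, and by monotonicity in $\varepsilon$ one obtains $S(\ell)=\bigcap_{m\ge 1}\{\omega: \mathrm{d}^\star_{N(\omega)}(B_m)>0\}$, where $\mathrm{d}^\star_{N(\omega)}(B)$ denotes the above relative upper density.

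Because a countable intersection of comeager sets is comeager, it suffices to fix an infinite set $B\subseteq\mathbf{N}$ and show that $\{\omega: \mathrm{d}^\star_{N(\omega)}(B)>0\}$ is comeager. Set $r_n(\omega):=|N(\omega)\cap B\cap[1,n]|/|N(\omega)\cap[1,n]|$ and observe that $r_n(\omega)$ depends only on the first $n$ digits of $\omega$, so $\{\omega: r_n(\omega)>1/2\}$ is clopen. Hence
$$
G_B:=\bigcap_{K\ge 1}\;\bigcup_{n\ge K}\{\omega: r_n(\omega)>1/2\}
$$
is a $G_\delta$ set contained in $\{\omega: \limsup_n r_n(\omega)\ge 1/2\}\subseteq \{\omega: \mathrm{d}^\star_{N(\omega)}(B)>0\}$. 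Thus the problem reduces to proving that each open set $V_K:=\bigcup_{n\ge K}\{\omega: r_n(\omega)>1/2\}$ is dense: a countable intersection of open dense sets is comeager, so $G_B$ is then comeager and we are done.

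For density, I would fix an arbitrary cylinder $[\sigma]$ with $\sigma\in\{0,1\}^{L}$ and construct $\omega\in[\sigma]\cap V_K$ directly. Using that $B$ is infinite, choose $M>L$ elements $b_1<\dots<b_M$ of $B$ with $b_1>\max(L,K)$, set $d_{b_i}(\omega)=1$ for $i\le M$, set $d_j(\omega)=0$ for every other $j\in(L,b_M]$, and fill the tail beyond $b_M$ with infinitely many $1$'s (to respect the representation convention). With $n:=b_M\ge K$ one has $|N(\omega)\cap[1,n]|\le L+M$ while $|N(\omega)\cap B\cap[1,n]|\ge M$, so $r_n(\omega)\ge M/(L+M)>1/2$; hence $\omega\in V_K$, and $\omega\in[\sigma]$ by construction. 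This proves $V_K$ dense and completes the argument.

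The genuinely delicate points are not computational but structural: getting the bookkeeping identity exactly right (in particular the reindexing $k\mapsto n_k$ and the local constancy of the ratio), and checking that the identification of $(0,1]$ with Cantor space transports comeagerness faithfully. Once these are in place the category argument is soft, and the contrast with the measure statement is instructive: to prove a \emph{measure} result one must control the typical behaviour of the digits, whereas for \emph{category} it suffices to force, from any finite initial segment, a single late scale $n\ge K$ at which retained $B$-indices dominate — which the infinitude of $B$ makes trivially possible. This asymmetry is precisely what drives the non-analogue recorded in Theorem \ref{thm:main}.
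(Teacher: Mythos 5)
Your proof is correct, and although its engine is the same Baire-category forcing as the paper's --- from any prescribed finite block of digits, retain many indices from the infinite set $B_m:=\{n: |x_n-\ell|\le \nicefrac{1}{m}\}$ and few others, so that at a single late scale the relevant proportion exceeds $\nicefrac{1}{2}$ --- the decomposition and the bookkeeping genuinely differ. The paper expresses $\{\omega: \ell\notin \Gamma_{x\upharpoonright\omega}\}$ as a countable union of sets $Q(m,k,n)$ built from the quantities $q_{\omega,m}(N)$, indexed by the \emph{subsequence count} $N$, and proves each $Q(m,k,n)$ nowhere dense via a subinterval $J$ of a given interval $I$; you instead exhibit a dense $G_\delta$ inside $S(\ell)$ directly, and you index density by the \emph{original position} $n$ through the relative-density identity $\mathrm{d}^\star\left(\{k: |(x\upharpoonright\omega)_k-\ell|\le \nicefrac{1}{m}\}\right)=\limsup_n |N(\omega)\cap B_m\cap[1,n]|\,/\,|N(\omega)\cap[1,n]|$, which you correctly justify by the local constancy of the ratio between consecutive retained indices. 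That choice makes the sets $\{\omega: r_n(\omega)>\nicefrac{1}{2}\}$ visibly determined by the first $n$ digits (clopen after your transfer to Cantor space, which you rightly note changes matters only by countable, hence meager, sets --- alternatively you could stay in $(0,1]$, where these sets are finite unions of half-open dyadic intervals, equally harmless for category), and it quietly repairs a small imprecision in the paper's proof: there $J$ fixes the digits $i\le N_n$, yet $q_{\omega,m}(N_n)$ depends on the positions of the first $N_n$ retained indices of $\omega$, which generally exceed $N_n$; one should fix digits up to the position of the $N_n$-th one of $\omega_0$, exactly as your choice of scale $n=b_M$ does. The only cosmetic point to add in a final write-up is the convention $r_n(\omega):=0$ when $N(\omega)\cap[1,n]=\emptyset$, so that $r_n$ is everywhere defined; with that, your reduction to the statement that $\{\omega: \mathrm{d}^\star_{N(\omega)}(B)>0\}$ is comeager for \emph{every} infinite $B\subseteq\mathbf{N}$ (each $B_m$ being infinite precisely because $\ell\in\clusterfin$) is a clean and slightly more robust packaging of the paper's argument.
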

\begin{proof}
Fix $\omega \in (0,1]$ and note that $\ell$ is a statistical cluster point of the subsequence $x\upharpoonright \omega$ if and only if
$$
\forall m \in \mathbf{N},\,\,\,\,\,\mathrm{d}^\star\left(\left\{n \in \mathbf{N}: |\,(x\upharpoonright \omega)_n-\ell\,|\le \frac{1}{m}\right\}\right)>0,
$$
where $\mathrm{d}^\star$ stands for the upper asymptotic density defined in \eqref{eq:dstar}. Hence, $\ell \in \Gamma_{x\upharpoonright \omega}$ if and only if 
$$
\forall m \in \mathbf{N},\exists k \in \mathbf{N},\,\,\,\,\,\, \frac{|\{n\le N: |\,(x\upharpoonright \omega)_n-\ell\,|\le \nicefrac{1}{m}\}|}{N} \ge \frac{1}{2k}
$$
for infinitely many $N \in \mathbf{N}$. Setting 
$$
q_{\omega,m}(N):=\frac{|\{n\le N: |\,(x\upharpoonright \omega)_n-\ell\,|\le \nicefrac{1}{m}\}|}{N}
$$
for each $m,N \in \mathbf{N}$ and $\omega \in (0,1]$, it follows that the set $\left\{\omega: \ell \in \Gamma_{x\upharpoonright \omega}\right\}$ can be rewritten as
$$
\bigcap_{m \in \mathbf{N}} \bigcup_{k \in \mathbf{N}} \left\{\omega: \exists^\infty N \in \mathbf{N}, q_{\omega,m}(N) \ge \frac{1}{2k}\right\}.
$$

Accordingly, we have equivalently to prove that the set 
\begin{displaymath}
\begin{split}
\left\{\omega: \ell \notin \Gamma_{x\upharpoonright \omega}\right\} &=\bigcup_{m \in \mathbf{N}} \bigcap_{k \in \mathbf{N}} \left\{\omega: \forall^\infty N \in \mathbf{N}, q_{\omega,m}(N) < \frac{1}{2k}\right\}\\
&=\bigcup_{m \in \mathbf{N}} \bigcap_{k \in \mathbf{N}} \bigcup_{n \in \mathbf{N}} \bigcap_{N \ge n} \left\{\omega: q_{\omega,m}(N) < \frac{1}{2k}\right\}
\end{split}
\end{displaymath}
is a meager set. To this aim, it will be enough to show that the sets
$$
Q(m,k,n):=\bigcap_{N \ge n} \left\{\omega: q_{\omega,m}(N) < \frac{1}{2k}\right\}
$$
are nowhere dense for each $m,k,n \in \mathbf{N}$.

Fix $m,k,n \in \mathbf{N}$. It is well known that $Q(m,k,n)$ is nowhere dense if and only if, for each non-empty interval $I \subseteq (0,1]$, there exists a non-empty interval $J\subseteq I$ such that
\begin{equation}\label{eq:claim}
Q(m,k,n) \cap J=\emptyset.
\end{equation}
Since $\ell$ is an ordinary limit point of $x$, there exists $\omega_0 \in I$ such that 
$$
\forall^\infty N \in \mathbf{N},\,\,\,\,q_{\omega_0,m}(N) \ge \frac{1}{2}.
$$
In particular, for each $n \in \mathbf{N}$, there exists $N_n \in \mathbf{N}$ greater than $n$ such that $q_{\omega_0,m}(N_n) \ge \nicefrac{1}{2}$. Lastly, fix $n \in \mathbf{N}$ and define the non-empty interval
$$
J:=\left\{\omega \in (0,1]: \forall i\le N_n, d_i(\omega)=d_i(\omega_0)\right\}.
$$
It follows by construction that \eqref{eq:claim} holds, completing the proof.
\end{proof}

At this point, we can prove our main result.
\begin{proof}
[Proof of Theorem \ref{thm:main}] Let $x$ be a sequence such that $\Gamma_x=\clusterfin$. First, it is claimed that 
$$
S:=\left\{\omega \in (0,1]: \clusterfin=\Gamma_{x\upharpoonright \omega}\right\}
$$
is a comeager set. This is trivial if $\clusterfin$ is empty. Hence, let us suppose hereafter that $\clusterfin \neq \emptyset$. 

Since $\clusterfin$ is a non-empty closed set, there exists a non-empty countable dense subset $\{\ell_n: n \in \mathbf{N}\}$. Also, since $\Gamma_{x\upharpoonright \omega}$ is closed, we have that
$$
S=\bigcap_{n \in \mathbf{N}}S(\ell_n).
$$
By Lemma \ref{lem:prelim} each $S(\ell_n)$ is comeager, hence $S$ is comeager.

Conversely, assume that $\Gamma_x \subsetneq \clusterfin$. Considering that $S$ is comeager, then also $\{\omega: \Gamma_x \neq \Gamma_{x\upharpoonright \omega}\}$ is comeager, thus $\{\omega: \Gamma_x = \Gamma_{x\upharpoonright \omega}\}$ is meager.
\end{proof}

\begin{proof}
[Proof of Theorem \ref{thm:main2}]
Analogous to the proof of Lemma \ref{lem:prelim}, it can be verified that for each $k \in \mathbf{N}$ and $\ell \in \clusterfin$ the set
\begin{displaymath}
\begin{split}
T_k(\ell)&:=\left\{\omega: \mathrm{d}^\star\left(\{n: |(x\upharpoonright \omega)_n-\ell|\le \varepsilon\}\right) \ge \frac{1}{k} \text{ for all }\varepsilon>0\right\}\\
&\,=\bigcap_{m \in \mathbf{N}} \left\{\omega: \exists^\infty N \in \mathbf{N}, q_{\omega,m}(N) \ge \frac{1}{k}\right\}
\end{split}
\end{displaymath}
is comeager. Moreover, for each $\omega \in (0,1]$ and $k \in \mathbf{N}$ the set
$$
\Lambda_{x\upharpoonright \omega}^k:=\left\{\ell: \mathrm{d}^\star\left(\{n: |(x\upharpoonright \omega)_n-\ell|\le \varepsilon\}\right) \ge \frac{1}{k} \text{ for all }\varepsilon>0\right\}
$$
is closed (note that $\Lambda_{x\upharpoonright \omega}=\bigcup_{k \in \mathbf{N}}\Lambda_{x\upharpoonright \omega}$ is not necessarily closed).

Letting $\{\ell_n: n \in \mathbf{N}\}$ be a countable dense subset of $\clusterfin$, we conclude that
$$
\left\{\omega: \Lambda_{x\upharpoonright \omega}=\clusterfin\right\}=\bigcup_{k \in \mathbf{N}} \bigcap_{n \in \mathbf{N}} T_k(\ell_n) \supseteq \bigcap_{n \in \mathbf{N}} T_1(\ell_n)
$$
is comeager.

The rest is identical to the proof of Theorem \ref{thm:main}.
%
\end{proof}

Note that it follows by the proofs of Theorem \ref{thm:main} and \ref{thm:main2} that, given a real sequence $x$, the set $\{\omega \in (0,1]: \Lambda_{x\upharpoonright \omega}=\Gamma_{x\upharpoonright \omega}=\clusterfin\}$ is comeager.

\end{document}